\newtheorem{thm}{Theorem} 
\newtheorem{lem}[thm]{Lemma} 
\newtheorem{cor}[thm]{Corollary}
\newtheorem{prop}[thm]{Proposition}
\theoremstyle{definition}
\newtheorem{rmk}[thm]{Remark}
\numberwithin{thm}{section}
\numberwithin{equation}{section}
\newcommand{\C}{\mathbb{C}}
\newcommand{\F}{\mathbb{F}}
\newcommand{\Q}{\mathbb{Q}}
\newcommand{\Kb}{\overline{K}}
\newcommand{\Kab}{K^{\text{ab}}}
\newcommand{\Gab}{\mathrm{G}^{\text{ab}}}
\newcommand{\R}{\mathbb{R}}
\newcommand{\Z}{\mathbb{Z}}
\newcommand{\alphab}{\overline{\alpha}}
\newcommand{\mfq}{\mathfrak{q}}
\newcommand{\mfI}{\mathfrak{I}}
\newcommand{\mfL}{\mathfrak{L}}
\newcommand{\mfM}{\mathfrak{M}}
\newcommand{\mfP}{\mathfrak{P}}
\newcommand{\mfQ}{\mathfrak{Q}}
\newcommand{\util}{\widetilde{u}}
\newcommand{\cB}{\mathcal{B}}
\newcommand{\cC}{\mathcal{C}}
\newcommand{\cD}{\mathcal{D}}
\newcommand{\cP}{\mathcal{P}}
\newcommand{\cO}{\mathcal{O}}
\newcommand{\cS}{\mathcal{S}}
\newcommand{\Gal}{\mathrm{Gal}}
\newcommand{\G}{\mathrm{G}}
\newcommand{\M}{\mathrm{M}}
\newcommand{\N}{\mathrm{N}}
\newcommand{\End}{\mathrm{End}}
\newcommand{\Aut}{\mathrm{Aut}}
\newcommand{\Norm}{\mathrm{Norm}}
\newcommand{\Nrd}{\mathrm{Nrd}}
\newcommand{\cf}{cf.\ }
\newcommand{\inj}{\hookrightarrow}
\newcommand{\resp}{resp.\ }
\begin{document}

\title{Non-existence of points rational over number fields on Shimura curves}
\author{Keisuke Arai}
\date{}

\pagestyle{plain}

\maketitle

\begin{abstract}

Jordan, Rotger and de Vera-Piquero proved that Shimura curves have
no points rational over imaginary quadratic fields under a certain
assumption.
In this article, we expand their results to the case of number fields
of higher degree.
We also give counterexamples to the Hasse principle on Shimura curves.

\end{abstract}

\section{Introduction}
\label{intro}

Let $B$ be an indefinite quaternion division algebra over $\Q$,
and $d(B)$ its discriminant.
Fix a maximal order $\cO$ of $B$.
A \textit{QM-abelian surface by $\cO$} over a field $F$ is a pair $(A,i)$ where
$A$ is a $2$-dimensional abelian variety over $F$, and 
$i:\cO\inj\End_F(A)$ 
is an injective ring homomorphism satisfying $i(1)=id$
(\cf \cite[p.591]{Bu}).
Here, $\End_F(A)$ is the ring of endomorphisms of $A$ defined over $F$.
We assume that $A$ has a left $\cO$-action.
Let $M^B$ be the Shimura curve over $\Q$ associated to $B$,
which parameterizes isomorphism classes
of QM-abelian surfaces by $\cO$ (\cf \cite[p.93]{J}).
We know that $M^B$ is a proper smooth curve over $\Q$.
For an imaginary quadratic field $k$, we have $M^B(k)=\emptyset$
under a certain assumption (\cite[Theorem 6.3]{J}, \cite[Theorem 1.1]{RdV}).
We expand this result to the case of number fields
of higher degree in this article.
The method of the proof is based on the strategy in \cite{J},
and the key is to control the field of definition of the QM-abelian surface
corresponding to a rational point on $M^B$.
We also give counterexamples to the Hasse principle on $M^B$ over number fields.
We will discuss the relevance to the Manin obstruction in
a forthcoming article.

For a prime number $q$,
let $\cB(q)$ be the set of 
isomorphism classes of
indefinite quaternion division
algebras $B$ over $\Q$ such that
\begin{equation*}
\begin{cases}
B\otimes_{\Q}\Q(\sqrt{-q})\not\cong\M_2(\Q(\sqrt{-q})) & \text{if $q\ne 2$},\\
B\otimes_{\Q}\Q(\sqrt{-1})\not\cong\M_2(\Q(\sqrt{-1}))\ 
\text{and}\ 
B\otimes_{\Q}\Q(\sqrt{-2})\not\cong\M_2(\Q(\sqrt{-2}))
 & \text{if $q=2$}.
 \end{cases}
 \end{equation*}
For positive integers $N$ and $e$, let
$$\cC(N,e):=\Set{\alpha^e+\overline{\alpha}^e\in\Z|
\text{$\alpha\in\C$ is a root of $T^2+sT+N$
for some $s\in\Z$, $s^2\leq 4N$}
},$$
$$\cD(N,e):=\Set{a, a\pm N^{\frac{e}{2}}, a\pm 2N^{\frac{e}{2}},
a^2-3N^e \in\R | a\in\cC(N,e)}.$$
Here, $\overline{\alpha}$ is the complex conjugate of $\alpha$.
If $e$ is even, then $\cD(N,e)\subseteq\Z$.
For a subset $\cD\subseteq\Z$, let
$$\cP(\cD):=\Set{\text{prime divisors of some of the integers
in $\cD\setminus\{0\}$}}.$$
%
For a number field $k$ and a prime $\mfq$ of $k$ of residue characteristic $q$, let
\begin{itemize}
\item
$\kappa(\mfq)$: the residue field of $\mfq$,
\item
$\N_{\mfq}$: the cardinality of $\kappa(\mfq)$,
\item
$e_{\mfq}$: the ramification index of $\mfq$ in $k/\Q$,
\item
$f_{\mfq}$: the degree of the extension $\kappa(\mfq)/\F_q$,
\item
$\cS(k,\mfq)$: the set of isomorphism classes of
indefinite quaternion division
algebras $B$ over $\Q$ such that any prime divisor of $d(B)$ belongs to
\begin{equation*}
\begin{cases}
\cP(\cD(\N_{\mfq},e_{\mfq}))\cup\{q\} & 
\text{if $B\otimes_{\Q}k\cong\M_2(k)$ and $e_{\mfq}$ is even},\\
\cP(\cD(\N_{\mfq},2e_{\mfq}))\cup\{q\} & 
\text{if $B\otimes_{\Q}k\not\cong\M_2(k)$.}
\end{cases}
\end{equation*}
\end{itemize}
Note that $\cS(k,\mfq)$ is a finite set.
The main result of this article is:

\begin{thm}
\label{mainthm}

Let $k$ be a number field of even degree, and
$q$ a prime number such that
\begin{itemize}
\item
there is a unique prime $\mfq$ of $k$ above $q$,
\item
$f_{\mfq}$ is odd (and so $e_{\mfq}$ is even), and
\item
$B\in\cB(q)\setminus\cS(k,\mfq)$.
\end{itemize}
Then
$M^B(k)=\emptyset$.

\end{thm}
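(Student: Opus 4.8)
The plan is to argue by contradiction: suppose $P \in M^B(k)$ corresponds to a QM-abelian surface $(A,i)$ over $k$, and derive constraints on $d(B)$ strong enough to force $B \in \cS(k,\mfq)$ or $B \notin \cB(q)$, contradicting the hypotheses. The first step is to control the field of definition of $(A,i)$. Since $M^B$ has no real points and $k$ has even degree, one knows (following Shimura, Jordan, Rotger--de Vera-Piquero) that the QM-structure need not be defined over $k$ itself, but there is a quadratic extension over which things rigidify; more precisely, the obstruction to descending the QM-abelian surface is measured by a quaternion algebra, and a point $P \in M^B(k)$ gives rise to a QM-abelian surface $(A,i)$ defined over $k$ or over a quadratic extension $k'/k$, together with information about how $B \otimes_\Q k$ splits. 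This is where the two cases in the definition of $\cS(k,\mfq)$ come from: either $B \otimes_\Q k \cong \M_2(k)$ (and then one works over $k$, with $e_\mfq$ even), or $B \otimes_\Q k \not\cong \M_2(k)$ (and then one passes to a quadratic extension, effectively doubling the relevant ramification index to $2e_\mfq$).

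The second step is reduction modulo $\mfq$. Because $B \in \cB(q)$, the prime $q$ is such that $\Q(\sqrt{-q})$ (or $\Q(\sqrt{-1}), \Q(\sqrt{-2})$ when $q=2$) does not split $B$; this is exactly the condition that guarantees the reduction of $(A,i)$ at a prime above $q$ is \emph{not} ordinary — indeed, one expects supersingular reduction, or at least that the reduction has a QM-structure forcing its Frobenius to behave in a constrained way. Working over the residue field $\kappa(\mfq)$ of cardinality $\N_\mfq = q^{f_\mfq}$, one obtains an abelian surface with QM by $\cO$ over a finite field. The Frobenius endomorphism $\pi$ of this reduction, together with the QM-action, is highly restricted: its characteristic polynomial (as an element of a quaternion order) has the shape controlled by $T^2 + sT + \N_\mfq$ with $s^2 \le 4\N_\mfq$, which is precisely the set $\cC(\N_\mfq, e_\mfq)$ entering the definition (the exponent $e_\mfq$ appears because one may need to account for the ramification, or equivalently pass to a suitable power of Frobenius). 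The quantities $a \pm \N_\mfq^{e/2}$, $a \pm 2\N_\mfq^{e/2}$, $a^2 - 3\N_\mfq^e$ appearing in $\cD(N,e)$ are the traces or norms of the various combinations of Frobenius with roots of unity or with CM-generators that must be integral/realizable, and any prime dividing $d(B)$ must divide one of these — hence $d(B)$'s prime divisors lie in $\cP(\cD(\N_\mfq, e_\mfq)) \cup \{q\}$, i.e.\ $B \in \cS(k, \mfq)$, the desired contradiction.

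Concretely, I would proceed as follows. (1) Invoke the descent/field-of-definition lemma (the "key" referred to in the introduction) to produce, from $P \in M^B(k)$, a QM-abelian surface over $k$ or a controlled quadratic extension, and record which case of $\cS(k,\mfq)$ we are in. (2) Using $B \in \cB(q)$, show the abelian surface does not have good ordinary reduction at $\mfq$ — the point being that an ordinary QM-abelian surface would have CM by an order in $\Q(\sqrt{-q})$ (resp.\ $\Q(\sqrt{-1})$ or $\Q(\sqrt{-2})$) which would split $B$, contradicting membership in $\cB(q)$; so either the reduction is (potentially) good supersingular, or it is bad, and in the bad case Cerednik--Drinfeld uniformization (as used in \cite{J}, \cite{RdV}) applies at primes dividing $d(B)$. (3) Analyze the Frobenius of the reduction: its minimal polynomial over $\Q$ is $T^2 + sT + \N_\mfq$ for some integer $s$ with $s^2 \le 4\N_\mfq$ (Weil bound), so $\tr(\pi^{e}) + \overline{\tr(\pi^e)} \in \cC(\N_\mfq, e_\mfq)$ after adjusting by ramification. (4) Exploit the compatibility of the $\cO$-action with Frobenius (e.g.\ via the action on a Tate module, or on the special fiber of a Cerednik--Drinfeld model) to show that each prime $\ell \mid d(B)$, $\ell \ne q$, must divide one of the integers $a$, $a \pm \N_\mfq^{e/2}$, $a \pm 2\N_\mfq^{e/2}$, $a^2 - 3\N_\mfq^e$ for $a \in \cC(\N_\mfq, e_\mfq)$ — typically because $\ell$ divides $d(B)$ forces $B$ to be ramified at $\ell$, which in turn forces a congruence on $\pi$ modulo $\ell$ of exactly this shape (the $\pm\N^{e/2}$ and $\pm 2\N^{e/2}$ terms coming from comparison with $\mu_2, \mu_3$ or with the action of $\cO/\ell\cO$). (5) Conclude $B \in \cS(k,\mfq)$, contradicting the hypothesis $B \notin \cS(k,\mfq)$, so $M^B(k) = \emptyset$.

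The main obstacle I anticipate is step (4): making precise exactly which arithmetic of the reduction-Frobenius forces divisibility by the specific integers in $\cD(N,e)$, and doing so uniformly across the good-supersingular and bad-reduction cases. In the good supersingular case one needs to understand the quaternion order generated by $\pi$ and $\cO$ inside $\End(A_{\overline{\kappa(\mfq)}})$ (a definite quaternion algebra ramified at $q$ and $\infty$) and translate "$B$ ramified at $\ell$" into a congruence condition; in the bad-reduction case one uses the Cerednik--Drinfeld special fiber and the action of the relevant division algebra, where the combinatorics of the dual graph and the component group enter. Getting the exponent bookkeeping right ($e_\mfq$ versus $2e_\mfq$, and where the factor of $2$ or $3$ in $a \pm 2N^{e/2}$ and $a^2 - 3N^e$ comes from) is the delicate part; this is presumably where the hypothesis that $f_\mfq$ is odd (forcing $e_\mfq$ even, since $[k:\Q]$ is even) gets used, guaranteeing $\N_\mfq^{e_\mfq/2}$ is an integer and that the relevant quadratic twists behave correctly.
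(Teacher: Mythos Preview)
Your overall architecture is right --- argue by contradiction, pass from $k$ to $K$ (either $k$ itself or a quadratic extension, according to whether $B\otimes_{\Q}k$ splits), represent the point by a QM-abelian surface $(A,i)$ over $K$, and analyze a Frobenius at the unique prime $\mfQ$ of $K$ above $q$. You also correctly identify that $\cC(\N_{\mfq},e)$ records the possible Frobenius traces via the Weil bound.

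The genuine gap is your step~(4), and the missing tool is not Honda--Tate or \v{C}erednik--Drinfeld but Jordan's \emph{canonical isogeny character} $\varrho_p:\G_K\to\F_{p^2}^{\times}$ attached to $(A,i)$ at a prime $p\mid d(B)$. The paper fixes one such $p$ with $p\ne q$ and $p\notin\cP(\cD(\N_{\mfq},e_{\mfQ}))$ (this exists precisely because $B\notin\cS(k,\mfq)$), and runs the whole argument modulo this single $p$. Two inputs drive everything: (a) $\varrho_p^{12}$ is unramified outside $p$, so an id\`ele computation gives $\varrho_p^{12}(F_{\mfQ}^{e_{\mfQ}})\equiv q^{6[K:\Q]}\bmod p$; and (b) $a(F_{\mfQ}^{e_{\mfQ}})\equiv \varrho_p(F_{\mfQ}^{e_{\mfQ}})+(\N_{\mfQ})^{e_{\mfQ}}\varrho_p(F_{\mfQ}^{e_{\mfQ}})^{-1}\bmod p$. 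Setting $\varepsilon:=q^{-[K:\Q]/2}\varrho_p(F_{\mfQ}^{e_{\mfQ}})$ yields $\varepsilon^{12}=1$ and $a(F_{\mfQ}^{e_{\mfQ}})\equiv(\varepsilon+\varepsilon^{-1})q^{[K:\Q]/2}\bmod p$. Since $\varepsilon+\varepsilon^{-1}\in\{0,\pm 1,\pm 2,\pm\sqrt{3}\}$, this is exactly where the list $a,\ a\pm N^{e/2},\ a\pm 2N^{e/2},\ a^2-3N^e$ in $\cD(N,e)$ comes from --- it is the 12th-root-of-unity constraint, not a comparison with $\mu_2$ or $\mu_3$ as you guessed. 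The choice of $p$ then upgrades the congruence to an equality in $\Z$.

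You also have the role of $\cB(q)$ inverted. It is not used at the outset to rule out ordinary reduction; it is used only at the very end. Once the equality above holds, either $a(F_{\mfQ}^{e_{\mfQ}})^2=3q^{[K:\Q]}$ (impossible since $[K:\Q]$ is even), or $q\mid a(F_{\mfQ}^{e_{\mfQ}})$, hence $q\mid a(F_{\mfQ})$. With $f_{\mfQ}$ odd, results of Jordan then force $\Q(\sqrt{-q})$ (or $\Q(\sqrt{-1})$, $\Q(\sqrt{-2})$ when $q=2$) to split $B$, contradicting $B\in\cB(q)$. So your reduction-type dichotomy and the \v{C}erednik--Drinfeld analysis are not needed and, as you suspected, would not by themselves produce the specific divisibility constraints in $\cD(N,e)$.
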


\begin{rmk}
\label{MBR}
\begin{enumerate}[\upshape (1)]
\item
By \cite[Theorem 0]{Sh}, we have $M^B(\R)=\emptyset$.
\item
If $k$ is of odd degree, then $k$ has a real place, and so
$M^B(k)=\emptyset$.
\end{enumerate}
\end{rmk}

\section{Canonical isogeny characters}
\label{isogeny}

In this section, we review canonical isogeny characters associated to QM-abelian surfaces,
which were introduced in \cite[\S 4]{J}.
Let $K$ be a number field,
$\Kb$ an algebraic closure of $K$,
$\G_K=\Gal(\Kb/K)$ the absolute Galois group of $K$,
$\cO_K$ the ring of integers of $K$,
$(A,i)$ a QM-abelian surface by $\cO$
over $K$, and
$p$ a prime divisor of $d(B)$.
Then the $p$-torsion subgroup $A[p](\Kb)$ of $A$ has exactly one non-zero
proper left $\cO$-submodule, which we shall denote by $C_p$.
Then $C_p$ has order $p^2$, and is stable under the action of $\G_K$.
Let $\mfP_{\cO}\subseteq\cO$ be the unique left ideal of reduced norm $p\Z$.
In fact, $\mfP_{\cO}$ is a two-sided ideal of $\cO$.
Then $C_p$ is free of rank $1$ over $\cO/\mfP_{\cO}$.
Fix an isomorphism $\cO/\mfP_{\cO}\cong\F_{p^2}$.
The action of $\G_K$ on $C_p$ yields a character
\begin{equation*}
\label{canisog}
\varrho_p:\G_K\longrightarrow\Aut_{\cO}(C_p)\cong\F_{p^2}^{\times}.
\end{equation*}
Here, $\Aut_{\cO}(C_p)$ is the group of 
$\cO$-linear automorphisms of $C_p$.
The character $\varrho_p$ depends on the choice of the isomorphism
$\cO/\mfP_{\cO}\cong\F_{p^2}$, but the pair $\{\varrho_p,(\varrho_p)^p\}$
is independent of this choice.
Either of the characters $\varrho_p,(\varrho_p)^p$ is called a
\textit{canonical isogeny character} at $p$.
We have an induced character
$$\varrho^{\text{ab}}_p:\Gab_K\longrightarrow\F_{p^2}^{\times},$$
where $\Gab_K$ is the Galois group of the maximal abelian extension $\Kab/K$.

For a prime $\mfL$ of $K$,
let $\cO_{K,\mfL}$ be the completion of $\cO_K$ at $\mfL$, and
$$r_p(\mfL):\cO_{K,\mfL}^{\times}\longrightarrow\F_{p^2}^{\times}$$
the composition
\begin{equation*}
\begin{CD}
\cO_{K,\mfL}^{\times}@>\text{$\omega_{\mfL}$}>>
\Gab_K@>\text{$\varrho^{\text{ab}}_p$}>>\F_{p^2}^{\times}.
\end{CD}
\end{equation*}
Here $\omega_{\mfL}$ is the Artin map.

\begin{prop}[{\cite[Proposition 4.7 (2)]{J}}]
\label{r12}

If $\mfL\nmid p$, then $r_p(\mfL)^{12}=1$.

\end{prop}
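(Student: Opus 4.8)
The plan is to translate the assertion, via local class field theory, into a bound on the image of an inertia group at $\mfL$ inside $\F_{p^2}^{\times}$, and then to obtain that bound from the reduction of $(A,i)$ at $\mfL$.

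First, fix an inertia subgroup $I_{\mfL}\subseteq\G_K$ at $\mfL$. By local class field theory $\omega_{\mfL}$ maps $\cO_{K,\mfL}^{\times}$ onto the image of $I_{\mfL}$ under the projection $\G_K\surj\Gab_K$, and since the target $\F_{p^2}^{\times}$ is abelian, $\varrho_p$ factors through $\Gab_K$. Hence the image of $r_p(\mfL)$ coincides, as a subgroup of $\F_{p^2}^{\times}$, with $\varrho_p(I_{\mfL})$, and it suffices to show that $\varrho_p(I_{\mfL})$ has order dividing $12$.

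Next I would use that a QM-abelian surface has potentially good reduction at every prime (standard; the action of the division algebra $B$ rules out a nontrivial toric part in the semistable reduction — see \cite{J}). Thus $A$ acquires good reduction over a finite, say totally ramified, extension $F'$ of the completion $K_{\mfL}$. Let $\Atil$ be the special fibre of the Néron model of $A_{F'}$; by the Néron mapping property $\cO$ still acts on $\Atil$, so $\Atil$ is again a QM-abelian surface, now over a finite field, and since $\mfL\nmid p$ the reduction map is an $\cO$-linear isomorphism $A[p]\xrightarrow{\sim}\Atil[p]$ compatible with the Galois actions. Consequently $I_{\mfL}$ acts on $A[p]$, and hence on $C_p$, through the finite quotient $\Phi$ of $I_{\mfL}$ by which it acts on $\Atil$; moreover the image of $C_p$ is the unique nonzero proper $\cO$-submodule of $\Atil[p]$ (recall the first paragraph of this section), so it is preserved by every $\cO$-linear automorphism of $\Atil$. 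This realizes $\varrho_p(I_{\mfL})$ as a cyclic quotient of a finite subgroup of $\Aut_{\cO}(\Atil)$.

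It then remains to identify $\Aut_{\cO}(\Atil)$. It is finite, being contained in $\Aut(\Atil)$, and each of its elements is a torsion unit of an order in the semisimple $\Q$-algebra $\End_{\cO}(\Atil)\otimes\Q$. Using the known structure of endomorphism algebras of abelian surfaces over finite fields, together with the double centralizer theorem applied to $B\subseteq\End(\Atil)\otimes\Q$, one checks that $\End_{\cO}(\Atil)\otimes\Q$ is $\Q$, an imaginary quadratic field, or a quaternion algebra over $\Q$. In each case any torsion unit generates a subfield of degree at most $2$ over $\Q$, hence is a root of unity of order in $\{1,2,3,4,6\}$, so every finite subgroup of $\Aut_{\cO}(\Atil)$ has exponent dividing $\mathrm{lcm}(4,6)=12$. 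Therefore $\varrho_p(I_{\mfL})$, being a cyclic quotient of such a subgroup, has order dividing $12$, and hence $r_p(\mfL)^{12}=1$.

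The step I expect to be the main obstacle is the passage to the reduction: one must invoke potential good reduction cleanly and then keep careful track of the interplay of the $\cO$-action, the Galois action and the reduction map, so that $\varrho_p(I_{\mfL})$ genuinely factors through $\Aut_{\cO}(\Atil)$ and through its unique proper $\cO$-submodule. The endomorphism-algebra input and the resulting exponent bound are then a routine, if case-heavy, verification.
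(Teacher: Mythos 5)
The paper does not prove this proposition; it cites Jordan [J, Proposition 4.7(2)] directly. Your argument — translating the statement via the Artin map into the bound $\#\varrho_p(I_{\mfL})\mid 12$, invoking potential good reduction of QM-abelian surfaces so that inertia acts on $C_p$ through $\Aut_{\cO}(\Atil)$, and then bounding the exponent of $\Aut_{\cO}(\Atil)$ by $12$ from the fact that $\End_{\cO}(\Atil)\otimes\Q$ is $\Q$, an imaginary quadratic field, or a definite quaternion $\Q$-algebra — is exactly the strategy of Jordan's proof, so the proposal is correct and follows the same route as the cited source.
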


Fix a prime $\mfP$ of $K$ above $p$.
Then we have an isomorphism $\kappa(\mfP)\cong\F_{p^{f_{\mfP}}}$
of finite fields.
Let $t_{\mfP}:=\text{gcd}(2,f_{\mfP})\in\{1,2\}$.

\begin{prop}[{\cite[Proposition 4.8]{J}}]
\label{r&c}
\begin{enumerate}[\upshape (1)]
\item
There is a unique element $c_{\mfP}\in\Z/(p^{t_{\mfP}}-1)\Z$ satisfying
$r_p(\mfP)(u)=\Norm_{\kappa(\mfP)/\F_{p^{t_{\mfP}}}}(\util)^{-c_{\mfP}}$
for any $u\in\cO_{K,\mfP}^{\times}$.
Here, $\util\in\kappa(\mfP)$ is the reduction of $u$ modulo $\mfP$.
\item
$\displaystyle\frac{2c_{\mfP}}{t_{\mfP}}\equiv e_{\mfP}\bmod{(p-1)}$.
\end{enumerate}
\end{prop}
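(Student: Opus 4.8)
The plan is to reduce the statement to a computation at $\mfP$ involving the restriction of the canonical isogeny character to inertia. By definition $r_p(\mfP)=\varrho^{\text{ab}}_p\circ\omega_\mfP$, and by local class field theory the Artin map $\omega_\mfP$ carries $\cO_{K,\mfP}^\times$ onto the inertia subgroup $I_\mfP$ of $\Gab_K$ attached to $\mfP$; so $r_p(\mfP)$ is, up to the identification furnished by $\omega_\mfP$, nothing but $\varrho_p|_{I_\mfP}$. Since $\Aut_\cO(C_p)\cong\F_{p^2}^\times$ has order $p^2-1$, which is prime to $p$, the character $\varrho_p|_{I_\mfP}$ is trivial on wild inertia and factors through the tame quotient $I_\mfP^{\mathrm{t}}$. (Note that, in contrast with the case $\mfL\nmid p$ of Proposition \ref{r12}, genuine ramification will survive here.)

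Next I would bring in the fact that $A$ has potentially good reduction at $\mfP$: this holds at every prime of $K$ because $B=\End^0(A)$ is a division algebra, so $A$ admits no nontrivial toric part in the special fibre of a semistable model; for $p\mid d(B)$ the potential reduction is moreover supersingular. Over a finite extension $L$ of $K_\mfP$ where $A$ has good reduction, the $\cO$-stable subgroup $C_p=A[\mfP_\cO]$ is the $p$-part of a finite flat $\cO$-subgroup scheme of an abelian scheme; equivalently $C_p$ lives inside the $\mfP_\cO$-torsion of the formal group $\widehat{A}$, a formal $\cO\otimes\Z_p$-module of $\Z_p$-dimension $2$ and ``$\cO_D$-height one'' (with $\cO\otimes\Z_p=\cO_D$ the maximal order of the division quaternion $\Q_p$-algebra). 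The Serre--Raynaud analysis of the tame action of inertia on the torsion of such a group then shows that $\varrho_p|_{I_\mfP^{\mathrm{t}}}$, transported through $\omega_\mfP$, is a power of a fundamental character; norming down to $\F_{p^{t_\mfP}}$ --- the effective level is $t_\mfP=\gcd(2,f_\mfP)$ because the full decomposition group acts $\F_{p^2}$-linearly while $\Frob_\mfP$ acts on $\kappa(\mfP)$ with $\F_{p^2}\cap\kappa(\mfP)=\F_{p^{t_\mfP}}$ --- one obtains $r_p(\mfP)(u)=\Norm_{\kappa(\mfP)/\F_{p^{t_\mfP}}}(\util)^{-c_\mfP}$. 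Uniqueness of $c_\mfP$ modulo $p^{t_\mfP}-1$ is immediate, since $u\mapsto\util$ is surjective onto $\kappa(\mfP)^\times$ and $\Norm_{\kappa(\mfP)/\F_{p^{t_\mfP}}}$ is surjective onto $\F_{p^{t_\mfP}}^\times$, a group of order exactly $p^{t_\mfP}-1$. This proves (1).

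For (2) I would post-compose with $\Norm_{\F_{p^2}/\F_p}$. On one side, $\Norm_{\F_{p^2}/\F_p}$ restricted to $\F_{p^{t_\mfP}}^\times$ is the map $x\mapsto x^{2/t_\mfP}$, so by transitivity of norms $\Norm_{\F_{p^2}/\F_p}\circ r_p(\mfP)$ sends $u$ to $\Norm_{\kappa(\mfP)/\F_p}(\util)^{-2c_\mfP/t_\mfP}$. On the other side, $\Norm_{\F_{p^2}/\F_p}\circ\varrho_p=\det_{\F_p}C_p$, and the key point is the identity $\det_{\F_p}(C_p)|_{I_\mfP}=\chi_p|_{I_\mfP}$, where $\chi_p$ is the mod-$p$ cyclotomic character; this is the assertion that the Hodge--Tate weight carried by $C_p$ equals $1$, and it is to be read off from the quaternionic Lubin--Tate description of $\widehat{A}$ (the two $\cO$-stable pieces $C_p$ and $A[p]/C_p$ each receiving weight $1$, since $\widehat{A}$ has dimension $2$), together with the fact that $\mfP_\cO=\overline{\mfP_\cO}$ is the unique two-sided ideal of reduced norm $p\Z$. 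Now $\chi_p|_{I_\mfP}$ corresponds under local class field theory to $u\mapsto\Norm_{\kappa(\mfP)/\F_p}(\util)^{-e_\mfP}$ --- the exponent $e_\mfP$ appearing because $K_\mfP/\Q_p$ has ramification index $e_\mfP$ and $\chi_p$ is the level-one fundamental character of $\Q_p$ --- so equating exponents modulo $p-1$ (the order of $\F_p^\times$) yields $\tfrac{2c_\mfP}{t_\mfP}\equiv e_\mfP\bmod(p-1)$.

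The conceptual skeleton --- reduction to inertia via class field theory, potential good reduction, Serre-style fundamental characters --- is standard; the real work lies in three places. First, the integral $p$-adic input must be established uniformly in $p$, with no constraint relating $p$ and $e_\mfP$, so instead of Raynaud's classification of finite flat group schemes (which needs $e<p-1$) one should argue directly with the $p$-divisible group of the good-reduction model over $\cO_L$ and its Dieudonné or Hodge--Tate decomposition, carrying the $\cO$-action throughout. Second, and I expect this to be the main obstacle, pinning $\det_{\F_p}(C_p)|_{I_\mfP}$ to $\chi_p|_{I_\mfP}$ exactly --- not merely up to an unramified or quadratic twist --- is the genuinely quaternionic step, and it is what makes the weight come out to be $e_\mfP$ on the nose. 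Third, there is the unglamorous but essential bookkeeping of levels ($1$ versus $2$, and the role of the parity of $f_\mfP$ through $t_\mfP$) and of the precise normalization of the Artin map, which together produce the exact shape $\Norm_{\kappa(\mfP)/\F_{p^{t_\mfP}}}(\util)^{-c_\mfP}$ and the factor $2/t_\mfP$.
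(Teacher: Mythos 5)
This proposition is stated with a citation to Jordan \cite[Proposition 4.8]{J}; the paper supplies no proof of its own, so there is no in-paper argument to compare yours against.

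Judged on its own terms, your sketch has the expected skeleton and most of the bookkeeping right: the local Artin map carries $\cO_{K,\mfP}^\times$ onto inertia, so $r_p(\mfP)$ is $\varrho_p$ on inertia at $\mfP$; since $|\F_{p^2}^\times|=p^2-1$ is prime to $p$ only tame inertia contributes; potential good (and at $\mfP\mid p\mid d(B)$ supersingular) reduction reduces (1) to Serre's tame fundamental characters; the level $t_\mfP=\gcd(2,f_\mfP)$ is the right one; and uniqueness of $c_\mfP$ is as you say. One expositional slip in (2): the assertion that $\Norm_{\F_{p^2}/\F_p}$ restricted to $\F_{p^{t_\mfP}}^\times$ is $x\mapsto x^{2/t_\mfP}$ is false when $t_\mfP=2$ (there it is $x\mapsto x^{1+p}$); the exponent $2c_\mfP/t_\mfP$ you want actually falls out of plain transitivity of norms, $\Norm_{\F_{p^2}/\F_p}\circ\Norm_{\kappa(\mfP)/\F_{p^{t_\mfP}}}=\Norm_{\kappa(\mfP)/\F_p}$ when $t_\mfP=2$ and $\Norm_{\F_{p^2}/\F_p}|_{\F_p^\times}=(\cdot)^2$ when $t_\mfP=1$, so the conclusion survives.

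The genuine gap is the one you flag yourself: the identity $\Norm_{\F_{p^2}/\F_p}\circ\varrho_p=\chi_p$. Your route through Hodge--Tate weights of the formal group of a good-reduction model can at best constrain this on inertia after base change, and it leaves an undetermined twist, so it does not by itself establish the congruence mod $p-1$ you need. The step that actually pins this down is global and specifically quaternionic: $A[p]$ is free of rank one over $\cO/p\cO$ with unique nonzero proper $\cO$-submodule $C_p=\mfP_\cO A[p]$; a uniformizer $\Pi$ of $\cO\otimes\Z_p$ yields a $\G_K$-equivariant but Frobenius-semilinear isomorphism $A[p]/C_p\xrightarrow{\ \sim\ }C_p$, so the quotient carries the character $\varrho_p^p$; and the $\cO$-compatible polarization, together with $\mfP_\cO$ being the self-conjugate two-sided ideal of reduced norm $p\Z$, makes $C_p$ isotropic for the Weil pairing on $A[p]$, producing a perfect $\G_K$-equivariant pairing $C_p\times(A[p]/C_p)\to\mu_p$. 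Assembling these facts gives $\varrho_p^{1+p}=\chi_p$ as characters of $\G_K$. Without carrying out that argument (which is the content of the relevant lemmas in \cite{J}), part (2) of your proposal remains an assertion rather than a proof.
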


\begin{cor}
\label{rl2}

For any prime number $l\ne p$, we have
$r_p(\mfP)(l^{-1})^2=l^{e_{\mfP} f_{\mfP}}\bmod{p}$.

\end{cor}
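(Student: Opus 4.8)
The plan is to derive this directly from Proposition~\ref{r&c}. Since $l\ne p$, the rational prime $l$ is a unit at $\mfP$, so its reduction $\widetilde{l}$ modulo $\mfP$ lies in $\F_p^{\times}\subseteq\kappa(\mfP)$; in particular $\widetilde{l}\ne 0$ and $\widetilde{l}^{\,p-1}=1$. Applying Proposition~\ref{r&c}~(1) with $u=l^{-1}$, and using that the reduction of $l^{-1}$ modulo $\mfP$ is $\widetilde{l}^{\,-1}$, I obtain $r_p(\mfP)(l^{-1})=\Norm_{\kappa(\mfP)/\F_{p^{t_{\mfP}}}}(\widetilde{l}^{\,-1})^{-c_{\mfP}}=\Norm_{\kappa(\mfP)/\F_{p^{t_{\mfP}}}}(\widetilde{l})^{\,c_{\mfP}}$.

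Next I would evaluate the norm explicitly. The extension $\kappa(\mfP)/\F_{p^{t_{\mfP}}}$ has degree $f_{\mfP}/t_{\mfP}$ (an integer, since $t_{\mfP}=\gcd(2,f_{\mfP})$ divides $f_{\mfP}$), with Galois group generated by $x\mapsto x^{p^{t_{\mfP}}}$. Because $\widetilde{l}$ lies in the prime field $\F_p$, it is fixed by this Frobenius, so $\Norm_{\kappa(\mfP)/\F_{p^{t_{\mfP}}}}(\widetilde{l})=\widetilde{l}^{\,f_{\mfP}/t_{\mfP}}$. Hence $r_p(\mfP)(l^{-1})=\widetilde{l}^{\,c_{\mfP}f_{\mfP}/t_{\mfP}}$, and squaring gives $r_p(\mfP)(l^{-1})^2=\widetilde{l}^{\,(2c_{\mfP}/t_{\mfP})f_{\mfP}}$; note that $2c_{\mfP}/t_{\mfP}$ is an honest integer, equal to $2c_{\mfP}$ if $t_{\mfP}=1$ and to $c_{\mfP}$ if $t_{\mfP}=2$.

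Finally I would invoke Proposition~\ref{r&c}~(2), which says $2c_{\mfP}/t_{\mfP}\equiv e_{\mfP}\bmod(p-1)$, hence $(2c_{\mfP}/t_{\mfP})f_{\mfP}\equiv e_{\mfP}f_{\mfP}\bmod(p-1)$. Since $\widetilde{l}^{\,p-1}=1$, the exponent may be reduced modulo $p-1$, yielding $r_p(\mfP)(l^{-1})^2=\widetilde{l}^{\,e_{\mfP}f_{\mfP}}=l^{e_{\mfP}f_{\mfP}}\bmod p$, which is the assertion.

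There is essentially no real obstacle: the corollary is a formal consequence of Proposition~\ref{r&c}, and the only points requiring a little care are checking that $\widetilde{l}$ is a nonzero element of the prime field $\F_p$—so that the norm collapses to the single power $\widetilde{l}^{\,f_{\mfP}/t_{\mfP}}$ and the exponent can legitimately be reduced modulo $p-1$—and keeping track of the factor $t_{\mfP}$ throughout.
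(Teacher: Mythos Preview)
Your proof is correct and follows essentially the same route as the paper: apply Proposition~\ref{r&c}(1) to express $r_p(\mfP)(l^{-1})$ as a power of the norm, collapse the norm using that $\widetilde{l}\in\F_p$, square, and then replace the exponent via Proposition~\ref{r&c}(2). The paper's argument is the same chain of equalities, only written as a single line without the surrounding commentary.
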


\begin{proof}

$r_p(\mfP)(l^{-1})^2
=(\Norm_{\kappa(\mfP)/\F_{p^{t_{\mfP}}}}(l^{-1})^{-c_{\mfP}})^2
=\Norm_{\F_{p^{f_{\mfP}}}/\F_{p^{t_{\mfP}}}}(l)^{2c_{\mfP}}
=l^{\frac{2c_{\mfP}f_{\mfP}}{t_{\mfP}}}
=l^{e_{\mfP} f_{\mfP}}\bmod{p}$.

\end{proof}

For a prime number $l$, the action of $\G_K$ on the $l$-adic Tate module $T_lA$
yields a representation
$$R_l:\G_K\longrightarrow\Aut_{\cO}(T_lA)\cong\cO_l^{\times}\subseteq B_l^{\times},$$
where $\Aut_{\cO}(T_lA)$ is the group of automorphisms of $T_lA$
commuting with the action of $\cO$,
and $\cO_l=\cO\otimes_{\Z}\Z_l$, $B_l=B\otimes_{\Q}\Q_l$.
Let $\Nrd_{B_l/\Q_l}$ be the reduced norm on $B_l$.
Let $\mfM$ be a prime of $K$, and
$F_{\mfM}\in\G_K$ a Frobenius element at $\mfM$.
For each $e\geq 1$, there is an integer $a(F_{\mfM}^e)\in\Z$ satisfying
$$\Nrd_{B_l/\Q_l}(T-R_l(F_{\mfM}^e))=T^2-a(F_{\mfM}^e)T+(\N_{\mfM})^e\in\Z[T]$$
for any $l$ prime to $\mfM$.

\begin{prop}[{\cite[Proposition 5.3]{J}}]
\label{a&rho}
\begin{enumerate}[\upshape (1)]
\item
We have $a(F_{\mfM}^e)^2\leq 4(\N_{\mfM})^e$
for any positive integer $e$.
\item
Assume $\mfM\nmid p$.
Then
$$a(F_{\mfM}^e)\equiv\varrho_p(F_{\mfM}^e)+(\N_{\mfM})^e \varrho_p(F_{\mfM}^e)^{-1}\bmod{p}$$
for any positive integer $e$.
\end{enumerate}
\end{prop}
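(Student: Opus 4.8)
The plan is to read both parts off from the reduced-norm identity $\Nrd_{B_l/\Q_l}(T-R_l(F_{\mfM}^e))=T^2-a(F_{\mfM}^e)T+(\N_{\mfM})^e$ recalled above: for part (1) I would combine it with the Weil conjectures, and for part (2) I would specialise it to $l=p$, which is permitted there since $\mfM\nmid p$, and exploit that $\cO\otimes_{\Z}\Z_p$ is the maximal order of the \emph{division} algebra $B\otimes_{\Q}\Q_p$.

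For part (1): the existence of a single integer $a(F_{\mfM}^e)$ fitting that identity for \emph{every} $l$ prime to $\mfM$ forces each such $R_l$ to be unramified at $\mfM$, so by the N\'eron--Ogg--Shafarevich criterion $A$ has good reduction at $\mfM$. Hence for $l\nmid\mfM$ the element $R_l(F_{\mfM})$ acts on $T_lA\cong T_l\wt{A}$ as the Frobenius of the reduction $\wt{A}$ over $\kappa(\mfM)$; after extending scalars to a splitting field of $B_l$, the two roots $\alpha,\beta$ of $T^2-a(F_{\mfM})T+\N_{\mfM}$ appear among the eigenvalues of that Frobenius, which by the Riemann hypothesis for abelian varieties over finite fields all have absolute value $(\N_{\mfM})^{1/2}$. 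Since $R_l(F_{\mfM}^e)=R_l(F_{\mfM})^e$, the reduced characteristic polynomial at $F_{\mfM}^e$ has roots $\alpha^e$ and $\beta^e$, so $a(F_{\mfM}^e)=\alpha^e+\beta^e$ and therefore $a(F_{\mfM}^e)^2=|\alpha^e+\beta^e|^2\leq(|\alpha|^e+|\beta|^e)^2=4(\N_{\mfM})^e$.

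For part (2): since $\mfM\nmid p$ and (as just observed) $A$ has good reduction at $\mfM$, I take $l=p$ in the identity, obtaining $a(F_{\mfM}^e)=\mathrm{Trd}_{B_p/\Q_p}(R_p(F_{\mfM})^e)$ and $(\N_{\mfM})^e=\Nrd_{B_p/\Q_p}(R_p(F_{\mfM})^e)$, where $R_p(F_{\mfM})\in\cO_p^{\times}$ and $\cO_p:=\cO\otimes_{\Z}\Z_p$. As $p\mid d(B)$, $\cO_p$ is the maximal order of the division algebra $B_p=B\otimes_{\Q}\Q_p$; let $\mfP_{\cO}$ be its maximal two-sided ideal, so $\cO_p/\mfP_{\cO}\cong\F_{p^2}$ via the isomorphism fixed earlier. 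Since $T_pA$ is free of rank one over $\cO_p$, the reduction of $R_p$ modulo $\mfP_{\cO}$ is the character $\varrho_p$ on $C_p$, up to interchanging $\varrho_p$ with $\varrho_p^p$ --- which is harmless, the sought formula being symmetric in $\varrho_p$ and $\varrho_p^p$. A short local computation with the presentation $\cO_p=W\oplus W\Pi$, where $W$ is the ring of integers of the unramified quadratic extension of $\Q_p$, $\Pi^2=p$ and $\Pi a=a^{\sigma}\Pi$ for $a\in W$, shows that $\mathrm{Trd}_{B_p/\Q_p}$ and $\Nrd_{B_p/\Q_p}$, reduced modulo $p$ and restricted to $\cO_p^{\times}$, factor through $\F_{p^2}^{\times}=(\cO_p/\mfP_{\cO})^{\times}$ and there coincide with $\Tr_{\F_{p^2}/\F_p}$ and $\Norm_{\F_{p^2}/\F_p}$ respectively. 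Writing $x:=\varrho_p(F_{\mfM}^e)$, this gives $a(F_{\mfM}^e)\equiv x+x^{p}\pmod p$ and $(\N_{\mfM})^e\equiv x^{p+1}\pmod p$; inserting $x^{p}\equiv(\N_{\mfM})^e x^{-1}$ into the first congruence yields $a(F_{\mfM}^e)\equiv\varrho_p(F_{\mfM}^e)+(\N_{\mfM})^e\,\varrho_p(F_{\mfM}^e)^{-1}\pmod p$.

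The step I expect to cost the most is the reduction-theoretic bookkeeping underlying part (1): identifying $a(F_{\mfM}^e)$ with a value of the reduced trace of the Frobenius of the reduction $\wt{A}$ so that the Riemann hypothesis over finite fields applies, and being careful about the passage between $R_l(F_{\mfM})$ and the Frobenius endomorphism of $\wt{A}$. Part (2) is then essentially formal, its only substantive ingredient being the description modulo $p$ of the reduced norm and reduced trace on the maximal order of a ramified quaternion algebra over $\Q_p$, together with the freeness of $T_pA$ over $\cO_p$, which is what makes $\varrho_p$ literally the reduction of $R_p$; both are standard.
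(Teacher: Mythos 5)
The statement you are proving is not proved in this paper at all: it is quoted verbatim from Jordan's paper, as the citation \cite[Proposition 5.3]{J} in the proposition heading indicates. So there is no in-paper argument to compare against; I can only assess your proposal on its own terms.

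Your treatment of part (2) is correct and is the standard argument: take $l=p$ (legitimate since $\mfM\nmid p$), use that $\cO_p$ is the maximal order in the local division algebra $B_p$ with its maximal two-sided ideal $\mfP_\cO$, note that freeness of $T_pA$ over $\cO_p$ makes $\varrho_p$ literally the reduction of $R_p$ modulo $\mfP_\cO$, and then the explicit description $\cO_p=W\oplus W\Pi$ shows $\mathrm{Trd}$ and $\Nrd$ reduce mod $p$ to $\Tr_{\F_{p^2}/\F_p}$ and $\Norm_{\F_{p^2}/\F_p}$. The final substitution $\varrho_p(F_{\mfM}^e)^p\equiv(\N_{\mfM})^e\,\varrho_p(F_{\mfM}^e)^{-1}$ is exactly right, and the symmetry under $\varrho_p\leftrightarrow\varrho_p^p$ disposes of the ambiguity in the normalization.

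In part (1), however, there is a genuine logical gap. You assert that the mere existence of an integer $a(F_{\mfM}^e)$ satisfying the reduced-norm identity for every $l\nmid\mfM$ ``forces each such $R_l$ to be unramified at $\mfM$,'' and then invoke N\'eron--Ogg--Shafarevich to conclude good reduction. That implication is not valid: the displayed identity is recorded for a \emph{fixed} Frobenius lift $F_{\mfM}$, and nothing in it rules out nontrivial inertia action. Worse, the conclusion you reach is actually false in general: by Jordan's Theorem 2.1 (which is exactly what the paper itself cites in its Case (1) discussion), a QM-abelian surface over a local field has only \emph{potentially} good reduction, not good reduction, at $\mfM$. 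The correct route is to cite potential good reduction as an input, not to try to deduce it; then the eigenvalues $\alpha_{\mfM},\betab_{\mfM}$ still have archimedean absolute value $(\N_{\mfM})^{1/2}$ (because some power of $F_{\mfM}$ acts as the Frobenius of the reduction over a finite extension of $\kappa(\mfM)$, and eigenvalues of a power determine absolute values of eigenvalues), and the Weil-bound computation $a(F_{\mfM}^e)^2=|\alpha_{\mfM}^e+\alphab_{\mfM}^e|^2\leq 4(\N_{\mfM})^e$ goes through as you wrote it. So the conclusion and the final inequality are fine; what is wrong is the justification of the reduction-theoretic input.
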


Let $\alpha_{\mfM},\alphab_{\mfM}\in\C$ be the roots of
$T^2-a(F_{\mfM})T+\N_{\mfM}$.
Then
$\alpha_{\mfM}+\alphab_{\mfM}=a(F_{\mfM})$
and
$\alpha_{\mfM}\alphab_{\mfM}=\N_{\mfM}$.
We see that the roots of $T^2-a(F_{\mfM}^e)T+(\N_{\mfM})^e$
are $\alpha_{\mfM}^e,\alphab_{\mfM}^e$.
Then $\alpha_{\mfM}^e+\alphab_{\mfM}^e=a(F_{\mfM}^e)$.
We have the following corollary to Proposition \ref{a&rho}(1) (for $e=1$):

\begin{cor}
\label{ainC}

We have $a(F_{\mfM}^e)\in\cC(\N_{\mfM},e)$
for any positive integer $e$.

\end{cor}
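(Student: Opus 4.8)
The plan is simply to unwind the definition of $\cC(\N_{\mfM},e)$ and exhibit the complex number witnessing membership. The natural candidate is $\alpha_{\mfM}$ itself: setting $s:=-a(F_{\mfM})\in\Z$ and $N:=\N_{\mfM}$, the element $\alpha_{\mfM}$ is by construction a root of $T^{2}-a(F_{\mfM})T+\N_{\mfM}=T^{2}+sT+N$. So the first step is to check that this $s$ satisfies the bound required in the definition of $\cC$, namely $s^{2}\leq 4N$; this is exactly Proposition \ref{a&rho}(1) in the case $e=1$, which gives $a(F_{\mfM})^{2}\leq 4\N_{\mfM}$.

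The second step is to recognize $a(F_{\mfM}^{e})$ as $\alpha^{e}+\overline{\alpha}^{e}$ for the choice $\alpha=\alpha_{\mfM}$. This is precisely the elementary computation recorded just before the statement: the roots of $T^{2}-a(F_{\mfM}^{e})T+(\N_{\mfM})^{e}$ are $\alpha_{\mfM}^{e}$ and $\alphab_{\mfM}^{e}$, whence $\alpha_{\mfM}^{e}+\alphab_{\mfM}^{e}=a(F_{\mfM}^{e})$. Since $a(F_{\mfM}^{e})\in\Z$ by its defining property (the reduced characteristic polynomial of $R_{l}(F_{\mfM}^{e})$ has integer coefficients), the sum $\alpha_{\mfM}^{e}+\alphab_{\mfM}^{e}$ lies in $\Z$, and therefore $a(F_{\mfM}^{e})\in\cC(\N_{\mfM},e)$ directly from the definition.

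There is essentially no real obstacle here; the only point deserving a word of care is that $\alphab_{\mfM}$ is genuinely the \emph{complex conjugate} of $\alpha_{\mfM}$, as the notation in the definition of $\cC(N,e)$ demands. But the inequality $a(F_{\mfM})^{2}\leq 4\N_{\mfM}$ forces the two roots $\alpha_{\mfM},\alphab_{\mfM}$ of the real quadratic $T^{2}-a(F_{\mfM})T+\N_{\mfM}$ to be a pair of complex conjugates (or an equal pair of real numbers when equality holds, in which case complex conjugation acts trivially and the identification is vacuous), so this is automatic and the corollary follows.
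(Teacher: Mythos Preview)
Your argument is correct and is exactly the paper's own reasoning: take $\alpha=\alpha_{\mfM}$ as a root of $T^{2}+sT+N$ with $s=-a(F_{\mfM})$, invoke Proposition~\ref{a&rho}(1) at $e=1$ for the bound $s^{2}\le 4N$, and use the identity $a(F_{\mfM}^{e})=\alpha_{\mfM}^{e}+\alphab_{\mfM}^{e}$ recorded just before the statement. Your extra sentence justifying that $\alphab_{\mfM}$ is the complex conjugate of $\alpha_{\mfM}$ is a helpful clarification the paper leaves implicit.
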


For a later use, we give the following lemma:

\begin{lem}
\label{m|a}

Let $m$ be the residue characteristic of $\mfM$.
The the following conditions are equivalent:
\begin{enumerate}[\upshape (i)]
\setlength{\itemsep}{0mm}
\setlength{\parskip}{0mm}
\item
$m\mid a(F_{\mfM})$.
\item
$m\mid a(F_{\mfM}^e)$ for a positive integer $e$.
\item
$m\mid a(F_{\mfM}^e)$ for any positive integer $e$.
\end{enumerate}
\end{lem}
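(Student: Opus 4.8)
The plan is to reduce all three conditions to a single congruence modulo $m$ relating $a(F_{\mfM}^e)$ to a power of $a(F_{\mfM})$. Write $a:=a(F_{\mfM})$ and $N:=\N_{\mfM}$, and for $e\geq 0$ set $p_e:=\alpha_{\mfM}^e+\alphab_{\mfM}^e$, so that $p_0=2$, $p_1=a$, and $p_e=a(F_{\mfM}^e)\in\Z$ for $e\geq 1$ (as recalled just before Corollary \ref{ainC}). Since $\alpha_{\mfM}$ and $\alphab_{\mfM}$ are the roots of $T^2-aT+N$, the power sums $p_e$ satisfy the Lucas-type recurrence $p_e=a\,p_{e-1}-N\,p_{e-2}$ for every $e\geq 2$.

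The only arithmetic input needed is that $N=\N_{\mfM}$ is a positive power of $m$: the residue field $\kappa(\mfM)$ has characteristic $m$, so $\N_{\mfM}=m^{f_{\mfM}}$ with $f_{\mfM}\geq 1$, whence $m\mid N$. Reducing the recurrence modulo $m$ gives $p_e\equiv a\,p_{e-1}\pmod{m}$ for all $e\geq 2$, and since $p_1=a$ an immediate induction on $e$ yields
$$a(F_{\mfM}^e)=p_e\equiv a^e=a(F_{\mfM})^e\pmod{m}\quad\text{for every }e\geq 1.$$

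Granting this congruence, the equivalences are formal. If $m\mid a(F_{\mfM})$ then $m\mid a(F_{\mfM})^e$, hence $m\mid a(F_{\mfM}^e)$ for every $e$; this gives (i) $\Rightarrow$ (iii), and (iii) $\Rightarrow$ (ii) is trivial. Conversely, if $m\mid a(F_{\mfM}^e)$ for some $e\geq 1$, then $m\mid a(F_{\mfM})^e$, and because $m$ is prime this forces $m\mid a(F_{\mfM})$; this is (ii) $\Rightarrow$ (i).

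I do not expect a genuine obstacle here. The two facts that make the argument work are that $m$ is a prime number, so that divisibility of a power by $m$ implies divisibility of the base, and that $\N_{\mfM}\equiv 0\pmod{m}$, both immediate from the definitions. One could instead argue $m$-adically, choosing a place above $m$ in $\Q(\alpha_{\mfM})$ and using that $\alpha_{\mfM}\alphab_{\mfM}=\N_{\mfM}$ is a non-unit there, but the recurrence computation above is shorter and avoids choosing primes above $m$.
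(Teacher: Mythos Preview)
Your proof is correct and follows essentially the same approach as the paper: both arguments establish the congruence $a(F_{\mfM}^e)\equiv a(F_{\mfM})^e\pmod{m}$ from $m\mid \N_{\mfM}$ and then use primality of $m$. The only cosmetic difference is that the paper reaches this congruence via the symmetric-function identity $(S+T)^e=S^e+T^e+ST\,P_e(S+T,ST)$ with $P_e\in\Z[S,T]$, applied to $S=\alpha_{\mfM}$, $T=\alphab_{\mfM}$, whereas you obtain it by the equivalent Lucas recurrence $p_e=a\,p_{e-1}-N\,p_{e-2}$.
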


\begin{proof}

For each $e\geq 1$, there is a polynomial $P_e(S,T)\in\Z[S,T]$ such that
$(S+T)^e=S^e+T^e+STP_e(S+T,ST)$.
Then
$a(F_{\mfM})^e=a(F_{\mfM}^e)+\N_{\mfM}P_e(a(F_{\mfM}),\N_{\mfM})$.
Since $m\mid \N_{\mfM}$, we have
$m\mid a(F_{\mfM})$
if and only if
$m\mid a(F_{\mfM}^e)$.

\end{proof}

\section{Proof of the main result}
\label{proof}

Now we prove Theorem \ref{mainthm}.
Suppose that the assumption of Theorem \ref{mainthm} holds.
Assume that there is a point $x\in M^B(k)$.
When $B\otimes_{\Q}k\not\cong\M_2(k)$, 
let $K_0$ be a quadratic extension of $k$ satisfying
$B\otimes_{\Q}K_0\cong\M_2(K_0)$.
Let
\begin{equation*}
K:=
\begin{cases}
k&\text{if $B\otimes_{\Q}k\cong\M_2(k)$,}\\
K_0&\text{if $B\otimes_{\Q}k\not\cong\M_2(k)$.}
\end{cases}
\end{equation*}
Note that the degree $[K:\Q]$ is even.
Then $x$
is represented by a QM-abelian surface $(A,i)$ by $\cO$ over $K$
(see \cite[Theorem 1.1]{J}).
Since $B\not\in\cS(k,\mfq)$, there is a prime divisor $p$ of $d(B)$ such that
$p\ne q$ and
$p$ does not belong to
\begin{equation*}
\begin{cases}
\cP(\cD(\N_{\mfq},e_{\mfq})) & 
\text{if $B\otimes_{\Q}k\cong\M_2(k)$},\\
\cP(\cD(\N_{\mfq},2e_{\mfq})) & 
\text{if $B\otimes_{\Q}k\not\cong\M_2(k)$.}
\end{cases}
\end{equation*}
Fix such $p$, and
let
$$\varrho_p:\G_K\longrightarrow\F_{p^2}^{\times}$$
be a canonical isogeny character at $p$ associated to $(A,i)$. 

By Proposition \ref{r12}, the character
$\varrho_p^{12}$ is unramified outside $p$.
Then it is identified with a character
$\mfI_K(p)\longrightarrow\F_{p^2}^{\times}$,
where $\mfI_K(p)$ is the group of fractional ideals of $K$ prime to $p$.
When $B\otimes_{\Q}k\not\cong\M_2(k)$,
we may assume that $\mfq$ is ramified in $K/k$
by replacing $K_0$ if necessary. 
In any case,
let $\mfQ$ be the unique prime of $K$ above $\mfq$.
Note that $\mfQ$ is the unique prime of $K$ above $q$, and so
$q\cO_K=\mfQ^{e_{\mfQ}}$,
$(\N_{\mfQ})^{e_{\mfQ}}=(q^{f_{\mfQ}})^{e_{\mfQ}}=q^{[K:\Q]}$.
Then by Corollary \ref{rl2}, we have
$$\varrho_p^{12}(F_{\mfQ}^{e_{\mfQ}})
=\varrho_p^{12}(\mfQ^{e_{\mfQ}})
=\varrho_p^{12}(q\cO_K)
=\varrho_p^{12}(1,\cdots,1,q,\cdots,q,\cdots)$$
$$=\varrho_p^{12}(q^{-1},\cdots,q^{-1},1,\cdots,1,\cdots)
=\prod_{\mfP|p}r_p(\mfP)^{12}(q^{-1})
\equiv\prod_{\mfP|p}q^{6e_{\mfP} f_{\mfP}}
=q^{6[K:\Q]}
\bmod{p}.$$
Here,
$(1,\cdots,1,q,\cdots,q,\cdots)$
(\resp $(q^{-1},\cdots,q^{-1},1,\cdots,1,\cdots)$)
is the id\`{e}le of $K$
whose components above $p$ are $1$ and the others $q$
(\resp whose components above $p$ are $q^{-1}$ and the others $1$),
and $\mfP$ runs through the primes of $K$ above $p$.
On the other hand, we have
$$a(F_{\mfQ}^{e_{\mfQ}})\equiv\varrho_p(F_{\mfQ}^{e_{\mfQ}})+(\N_{\mfQ})^{e_{\mfQ}} 
\varrho_p(F_{\mfQ}^{e_{\mfQ}})^{-1}
=\varrho_p(F_{\mfQ}^{e_{\mfQ}})+q^{[K:\Q]} \varrho_p(F_{\mfQ}^{e_{\mfQ}})^{-1}\bmod{p}$$
by Proposition \ref{a&rho}(2).
Let
$\varepsilon:=q^{-\frac{[K:\Q]}{2}} \varrho_p(F_{\mfQ}^{e_{\mfQ}})\in\F_{p^2}^{\times}$.
Then
$$\varepsilon^{12}=1 \qquad\text{and}\qquad
a(F_{\mfQ}^{e_{\mfQ}})\equiv(\varepsilon+\varepsilon^{-1})q^{\frac{[K:\Q]}{2}}\bmod{p}.$$
Therefore
$$a(F_{\mfQ}^{e_{\mfQ}})\equiv 0, \pm q^{\frac{[K:\Q]}{2}}, \pm 2q^{\frac{[K:\Q]}{2}}\bmod{p}
\qquad\text{or}\qquad
a(F_{\mfQ}^{e_{\mfQ}})^2\equiv 3q^{[K:\Q]}\bmod{p}.$$
By Corollary \ref{ainC}, we have $a(F_{\mfQ}^{e_{\mfQ}})\in\cC(\N_{\mfQ},e_{\mfQ})$.
We also have
\begin{equation*}
\N_{\mfQ}=\N_{\mfq}
\qquad\text{and}\qquad
e_{\mfQ}=
\begin{cases}
e_{\mfq}&\text{if $B\otimes_{\Q}k\cong\M_2(k)$,}\\
2e_{\mfq}&\text{if $B\otimes_{\Q}k\not\cong\M_2(k)$.}
\end{cases}
\end{equation*}
Then
$$a(F_{\mfQ}^{e_{\mfQ}}),
a(F_{\mfQ}^{e_{\mfQ}})\pm q^{\frac{[K:\Q]}{2}},
a(F_{\mfQ}^{e_{\mfQ}})\pm 2q^{\frac{[K:\Q]}{2}},
a(F_{\mfQ}^{e_{\mfQ}})^2-3q^{[K:\Q]}
\in\cD(\N_{\mfQ},e_{\mfQ}).$$
Since
$p\not\in\cP(\cD(\N_{\mfq},e_{\mfQ}))$,
we have
\begin{enumerate}
\item
$a(F_{\mfQ}^{e_{\mfQ}})=0, \pm q^{\frac{[K:\Q]}{2}}, \pm 2q^{\frac{[K:\Q]}{2}}$, or
\item
$a(F_{\mfQ}^{e_{\mfQ}})^2=3q^{[K:\Q]}$.
\end{enumerate}

\noindent
[Case (1)]. In this case, we have
$q\mid a(F_{\mfQ}^{e_{\mfQ}})$.
Then by Lemma \ref{m|a}, we have
$q\mid a(F_{\mfQ})$.
Since $f_{\mfQ}(=f_{\mfq})$ is odd, we obtain
$B\otimes_{\Q}\Q(\sqrt{-q})\cong\M_2(\Q(\sqrt{-q}))$
or
($q=2$ and $B\otimes_{\Q}\Q(\sqrt{-1})\cong\M_2(\Q(\sqrt{-1}))$)
(see \cite[Theorem 2.1, Propositions 2.3 and 5.1 (1)]{J}).
This contradicts $B\in\cB(q)$.

\noindent
[Case (2)].
In this case,
$q=3$ and $[K:\Q]$ is odd, which is a contradiction.

Therefore we conclude $M^B(k)=\emptyset$.

\qed

\section{Counterexamples to the Hasse principle}
\label{sec:ex}

\begin{table}[hbtp]
\caption{}
\label{tableABP}
{\small
\begin{center}
\begin{tabular}{c|c|c|c} \hline 
$(N,e)$ & $\cC(N,e)$ & $\cD(N,e)$ & $\cP(\cD(N,e))$ \\ \hline
$(2,2)$ & \parbox{20mm}{\strut{}$0$, $-3$, $-4$\strut} &
\parbox{68mm}{\strut{}$0$, $\pm 1$, $\pm 2$, $-3$, $\pm 4$, $-5$, $-6$, $-7$, $-8$, $-12$\strut} &
\parbox{38mm}{\strut{}$2$, $3$, $5$, $7$\strut} \\ \hline
$(2,4)$ & \parbox{20mm}{\strut{}$1$, $\pm 8$\strut} &
\parbox{68mm}{\strut{}$0$, $1$, $-3$, $\pm 4$, $5$, $-7$, $\pm 8$, $9$, $\pm 12$,
$\pm 16$, $-47$\strut} &
\parbox{38mm}{\strut{}2, 3, 5, 7, 47\strut} \\ \hline
$(2,6)$ & \parbox{20mm}{\strut{}0, 9, $-16$\strut} &
\parbox{68mm}{\strut{}0, 1, $-7$, $\pm$8, 9, $\pm$16, 17, $-24$, 25, $-32$,
64, $-111$, $-192$\strut} &
\parbox{38mm}{\strut{}2, 3, 5, 7, 17, 37\strut} \\ \hline
$(2,8)$ & \parbox{20mm}{\strut{}$-31$, 32\strut} &
\parbox{68mm}{\strut{}0, 1, $-15$, 16, $-31$, 32, $-47$, 48, $-63$, 64, 193, 256\strut} &
\parbox{38mm}{\strut{}2, 3 , 5, 7, 31, 47, 193\strut} \\ \hline
$(2,10)$ & \parbox{20mm}{\strut{}0, 57, $-64$\strut} &
\parbox{68mm}{\strut{}$0$, $-7$, $25$, $\pm 32$, $57$, $\pm 64$, $89$, $-96$,
$121$, $-128$, $177$, $1024$, $-3072$\strut} &
\parbox{38mm}{\strut{}2, 3, 5, 7, 11, 19, 59, 89\strut} \\ \hline
$(2,12)$ & \parbox{20mm}{\strut{}$-47$, $\pm 128$\strut} &
\parbox{68mm}{\strut{}0, 17, $-47$, $\pm 64$, 81, $-111$,
$\pm 128$, $-175$, $\pm 192$, $\pm 256$, 4096, $-10079$\strut} &
\parbox{38mm}{\strut{}2, 3, 5, 7, 17, 37, 47, 10079\strut} \\ \hline
$(2,14)$ & \parbox{20mm}{\strut{}0, $-87$, $-256$\strut} &
\parbox{68mm}{\strut{}0, 41, $-87$, $\pm 128$, 169, $-215$, $\pm 256$, $-343$, $-384$,
$-512$, 16384,$-41583$, $-49152$\strut} &
\parbox{38mm}{\strut{}2, 3, 5, 7, 13, 29, 41, 43, 83, 167\strut} \\ \hline
$(2,16)$ & \parbox{20mm}{\strut{}449, 512\strut} &
\parbox{68mm}{\strut{}0, $-63$, 193, 256, 449, 512, 705, 768, 961, 1024, 4993, 65536\strut} &
\parbox{38mm}{\strut{}2, 3, 5, 7, 31, 47, 193, 449, 4993\strut} \\ \hline
$(3,2)$ & \parbox{20mm}{\strut{}$-2$, 3, $-5$, $-6$\strut} &
\parbox{60mm}{\strut{}0, 1, $-2$, $\pm 3$, 4, $-5$, $\pm 6$, $-8$, $\pm 9$, $-11$,
$-12$, $-18$, $-23$\strut} &
\parbox{38mm}{\strut{}2, 3, 5, 11, 23\strut} \\ \hline
$(3,4)$ & \parbox{20mm}{\strut{}7, $-9$, $-14$, 18\strut} &
\parbox{68mm}{\strut{}0, $-2$, 4, $-5$, 7, $\pm 9$, $-11$, $-14$, 16,
$\pm 18$, $-23$, 25, $\pm 27$, $-32$, 36, $-47$, 81, $-162$, $-194$\strut} &
\parbox{38mm}{\strut{}2, 3, 5, 7, 11, 23, 47, 97\strut} \\ \hline
$(3,6)$ & \parbox{20mm}{\strut{}10, 46, $-54$\strut} &
\parbox{68mm}{\strut{}0, $-8$, 10, $-17$, 19, $-27$, 37, $-44$, 46, $-54$, 64,
$-71$, 73, $-81$, 100, $-108$, 729, $-2087$\strut} &
\parbox{38mm}{\strut{}2, 3, 5, 11, 17, 19, 23, 37, 71, 73, 2087\strut} \\ \hline
$(3,8)$ & \parbox{20mm}{\strut{}34, $-81$, $-113$, 162\strut} &
\parbox{68mm}{\strut{}0, $-32$, 34, $-47$, 49, $\pm 81$, $-113$, 115, $-128$, $\pm 162$,
$-194$, 196, $\pm 243$, $-275$, 324, 6561, $-6914$, $-13122$, $-18527$\strut} &
\parbox{38mm}{\strut{}2, 3, 5, 7, 11, 17, 23, 47, 97, 113, 191, 3457\strut} \\ \hline
$(3,10)$ & \parbox{20mm}{\strut{}243, 475, $-482$, $-486$\strut} &
\parbox{68mm}{\strut{}0, 4, $-11$, 232, $-239$, $\pm 243$, 475, $-482$, $\pm 486$, 718,
$-725$, $\pm 729$, 961, $-968$, $-972$, 48478, 55177, 59049, $-118098$\strut} &
\parbox{38mm}{\strut{}2, 3, 5, 11, 19, 23, 29, 31, 239, 241, 359, 2399, 24239\strut} \\ \hline
$(3,12)$ & \parbox{20mm}{\strut{}658, $-1358$, 1458\strut} &
\parbox{68mm}{\strut{}0, $-71$, 100, $-629$, 658, 729, $-800$, $-1358$, 1387, 1458,
$-2087$, 2116, 2187, $-2816$, 2916, 249841, 531441, $-1161359$\strut} &
\parbox{38mm}{\strut{}2, 3, 5, 7, 11, 17, 19, 23, 37, 47, 71, 73, 97, 433,
577, 1009, 1151, 2087\strut} \\ \hline
$(3,14)$ & \parbox{20mm}{\strut{}2187, 2515, 3022, $-4374$\strut} &
\parbox{60mm}{\strut{}0, 328, 835, $-1352$, $-1859$, $\pm 2187$, 2515, 3022,
$\pm 4374$, 4702, 5209, $\pm 6561$, 6889, 7396, $-8748$, 4782969, $-5216423$,
$-8023682$, $-9565938$\strut} &
\parbox{38mm}{\strut{}2, 3, 5, 11, 13, 23, 41, 43, 83, 167, 337, 503, 673, 1511,
2351, 5209, 24023\strut} \\ \hline
$(3,16)$ & \parbox{20mm}{\strut{}$-353$, $-6561$, $-11966$, 13122\strut} &
\parbox{68mm}{\strut{}0, $-353$, 1156, $-5405$, 6208, $\pm 6561$, $-6914$,
$-11966$, 12769, $\pm 13122$, $-13475$, $-18527$, $\pm 19683$, $-25088$, 26244, 14044993,
43046721, $-86093442$, $-129015554$\strut} &
\parbox{38mm}{\strut{}2, 3, 5, 7, 11, 17, 23, 31, 47, 97, 113, 191, 193, 353,
383, 2113, 3457, 30529, 36671\strut} \\ \hline
\end{tabular}
\end{center}
}
\end{table}

We have computed the sets $\cC(N,e),\cD(N,e),\cP(\cD(N,e))$
in several cases
as seen in Table \ref{tableABP}.
Then we obtain the following counterexamples to the Hasse principle
on $M^B$
over number fields:

\begin{prop}
\label{prop:ex}
\begin{enumerate}[\upshape (1)]
\item
Let $d(B)=39$, and let
$k=\Q(\sqrt{2},\sqrt{-13})$ or $\Q(\sqrt{-2},\sqrt{-13})$.
Then
$B\otimes_{\Q}k\cong\M_2(k)$,
$M^B(k)=\emptyset$
and
$M^B(k_v)\ne\emptyset$
for any place $v$ of $k$.
Here, $k_v$ is the completion of $k$ at $v$.
\item
Let $L$ be the subfield of $\Q(\zeta_9)$
satisfying $[L:\Q]=3$,
and
let $(d(B),k)=(62,L(\sqrt{-39}))$ or $(86,L(\sqrt{-15}))$.
Then
$B\otimes_{\Q}k\not\cong\M_2(k)$,
$M^B(k)=\emptyset$
and
$M^B(k_v)\ne\emptyset$
for any place $v$ of $k$.
\end{enumerate}
\end{prop}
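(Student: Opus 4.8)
The statement has two independent parts: that $M^B$ has no $k$-rational point, and that it has a $k_v$-point for every place $v$ of $k$. I would prove these separately.

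\textbf{No $k$-rational point.} The plan is to apply Theorem \ref{mainthm}, so I must produce in each case a prime $q$ meeting its three hypotheses. In (1) I would take $q=2$: each of the three quadratic subfields of $k$ (namely $\Q(\sqrt 2),\Q(\sqrt{-13}),\Q(\sqrt{-26})$, resp.\ with $\sqrt 2$ replaced by $\sqrt{-2}$) has even discriminant, hence is ramified at $2$; therefore $2$ does not split in any of them, so it is totally ramified in $k$ and there is a unique $\mfq\mid 2$ with $f_\mfq=1$, $e_\mfq=4$. In (2) I would take $q=3$: $L=\Q(\zeta_9)^+$ is totally ramified of degree $3$ at $3$ and $\Q(\sqrt{-39})$ (resp.\ $\Q(\sqrt{-15})$) is ramified at $3$, and since $\gcd(3,2)=1$ the local degrees multiply, so $3$ is totally ramified in $k$ with $f_\mfq=1$, $e_\mfq=6$. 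Next I would check $B\in\cB(q)$ by exhibiting a prime $p\mid d(B)$ that splits in $\Q(\sqrt{-q})$, which makes $B\otimes_\Q\Q(\sqrt{-q})$ ramified at $p$: in (1), $13\equiv1\bmod4$ gives such a $p$ for $\Q(\sqrt{-1})$ and $-2\equiv1\bmod3$ gives one for $\Q(\sqrt{-2})$; in (2), $31\equiv1\bmod3$ (resp.\ $43\equiv1\bmod3$) works for $\Q(\sqrt{-3})$. Finally I would check $B\notin\cS(k,\mfq)$ straight from Table \ref{tableABP}: in (1) we are in the case $B\otimes_\Q k\cong\M_2(k)$ with $e_\mfq$ even and $\N_\mfq=2$, and $\cP(\cD(2,4))\cup\{2\}=\{2,3,5,7,47\}$ misses the divisor $13$ of $d(B)$; in (2) we are in the case $B\otimes_\Q k\not\cong\M_2(k)$ with $\N_\mfq=3$, and $\cP(\cD(3,12))\cup\{3\}$ misses the divisor $31$ (resp.\ $43$). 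The two "cases" are themselves verified by local indices: in (1) the local degree $[k_v:\Q_p]$ is even at every place above $3$ (residue degree $2$) and above $13$ (degree $4$), and $k$ is totally imaginary, so $B$ is locally split everywhere and $B\otimes_\Q k\cong\M_2(k)$; in (2), $2$ is a primitive root mod $9$, hence inert in $L$, and $-39\equiv-15\equiv1\bmod8$ is a square in $\Q_2\subset L_2$, so $k$ has places above $2$ with $[k_v:\Q_2]=3$ odd, at which the division algebra $B$ stays ramified, so $B\otimes_\Q k\not\cong\M_2(k)$. Theorem \ref{mainthm} then gives $M^B(k)=\emptyset$.

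\textbf{Local points everywhere.} The archimedean places of $k$ are all complex ($k$ is CM, over $\Q(\sqrt 2)$ resp.\ over $L$), so $M^B(k_v)=M^B(\C)\ne\emptyset$. For a place $v$ of residue characteristic $p\mid d(B)$, the \v{C}erednik--Drinfeld uniformization presents $(M^B)^{\mathrm{an}}_{\Q_p}$ as a quotient $\Gamma\backslash\hat\Omega_p$ of Drinfeld's space $\hat\Omega_p=\pP^1_{\Q_p}\setminus\pP^1(\Q_p)$ with the uniformization map defined over $\Q_p$; since $[k_v:\Q_p]\ge 2$ in every case (a short check on the decomposition of $p$ in $k$: the residue degree above $3$ is $2$, the degree above $13$ is $4$, the degree above $2$ is $3$, and the degree above $31$ resp.\ $43$ is $6$), $\hat\Omega_p(k_v)=\pP^1(k_v)\setminus\pP^1(\Q_p)$ is nonempty and its image in $M^B(k_v)$ is too. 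For a place $v\mid\ell$ with $\ell\nmid d(B)$, $M^B$ has good reduction, with smooth geometrically connected special fibre $\bar M/\F_\ell$ of genus $g=g(M^B)$, and $M^B(k_v)\ne\emptyset$ iff $\bar M(\kappa(v))\ne\emptyset$; if $2\mid f_v$ then $\F_{\ell^2}\subseteq\kappa(v)$ and the supersingular points of $\bar M$—which exist by the mass formula and are $\F_{\ell^2}$-rational—supply such a point, while if $2\nmid f_v$ and $\N_v>4g^2$ the Hasse--Weil bound $\#\bar M(\F_{\N_v})\ge \N_v+1-2g\sqrt{\N_v}>0$ applies.

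\textbf{The main obstacle.} What remains is the finite list of places $v$ with $2\nmid f_v$ and $\N_v\le 4g^2$, which I would enumerate from the splitting of small primes in each $k$: these are certain small totally split primes, together with the totally ramified primes not dividing $d(B)$ (e.g.\ the prime above $2$ in (1) with $\N_v=2$, or the prime above $3$ in (2) with $\N_v=3$). For each such $v$ one must verify $\bar M(\F_{\N_v})\ne\emptyset$ by hand; via Jacquet--Langlands and Eichler--Shimura this comes down to $\N_v+1-\Tr\big(T_{\ell}\bigm|\text{($d(B)$-new part of $S_2(\Gamma_0(d(B)))$)}\big)\ge 0$ for these $v$, i.e.\ a computation of a handful of Hecke eigenvalue traces at levels $39$, $62$, $86$ (equivalently a direct point count on the reductions of the three Shimura curves). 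I expect this to be the real content: the fields $k$ are engineered so that a prime divisor of $d(B)$ escapes the set $\cP(\cD(\cdot,\cdot))$, forcing $M^B(k)=\emptyset$ through Theorem \ref{mainthm}, while the arithmetic of $k$ at small primes stays mild enough that good reduction (or \v{C}erednik--Drinfeld at $p\mid d(B)$) still yields $k_v$-points at every place.
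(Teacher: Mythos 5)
Your argument for $M^B(k)=\emptyset$ matches the paper's: identify the right prime ($q=2$ in (1), $q=3$ in (2)), verify $e_{\mfq}$, $f_{\mfq}$, verify $B\in\cB(q)$ by finding a prime divisor of $d(B)$ that splits in $\Q(\sqrt{-q})$ (and in $\Q(\sqrt{-1})$ for $q=2$), and read $B\notin\cS(k,\mfq)$ off Table \ref{tableABP}. The splitting/nonsplitting of $B\otimes_{\Q}k$ you verify locally; the paper instead observes in (1) that $3$ is inert and $13$ ramified in $\Q(\sqrt{-13})$, so $B\otimes_{\Q}\Q(\sqrt{-13})\cong\M_2(\Q(\sqrt{-13}))$ already, and in (2) uses an explicit symbol $\bigl(\frac{62,13}{\Q}\bigr)$ (resp. $\bigl(\frac{86,5}{\Q}\bigr)$) from Shimizu plus the citation to \cite[Prop.~8.1]{A3} for the place above $2$; these are the same checks, routed differently.

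The genuine gap is in the local-points part. You attempt to build $M^B(k_v)\ne\emptyset$ from scratch for every $v$ (complex places, \v{C}erednik--Drinfeld at $p\mid d(B)$, Weil bound at good-reduction primes of large residue cardinality), and you yourself flag a residual finite list of places needing a point count, concluding ``I expect this to be the real content.'' That leaves the statement unproved. Moreover, the \v{C}erednik--Drinfeld step as you phrase it is not quite rigorous: the uniformization over $\Q_p$ involves a twist by the unramified quadratic extension, so $\hat\Omega_p(k_v)\ne\emptyset$ does not by itself yield a $k_v$-point of $M^B$; one needs the precise criteria of Jordan--Livn\'e. The paper avoids all of this with one observation you missed: in each case $k$ is an extension of an imaginary quadratic field $k_0$ ($\Q(\sqrt{-13})$ in (1); $\Q(\sqrt{-39})$ or $\Q(\sqrt{-15})$ in (2)) for which $M^B(k_{0,w})\ne\emptyset$ at \emph{every} place $w$ is already recorded in the literature (\cite[p.~94]{J} and \cite[Table~1]{RdV}). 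Since $k_{0,w}\subseteq k_v$ for $w=v|_{k_0}$, the $k_v$-points come for free by base change, with no new computation needed. The fields $k$ in the proposition are engineered exactly so that this works: they sit above a quadratic field with local points everywhere, yet their ramification at $q$ pushes $B$ out of $\cS(k,\mfq)$.
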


\begin{proof}

(1)
The prime number $3$ (\resp $13$) is inert (\resp ramified) in $\Q(\sqrt{-13})$.
Then
$B\otimes_{\Q}\Q(\sqrt{-13})\cong\M_2(\Q(\sqrt{-13}))$,
and so
$B\otimes_{\Q}k\cong\M_2(k)$.

Applying Theorem \ref{mainthm} to $q=2$, we obtain
$M^B(k)=\emptyset$.
In fact,
$(e_{\mfq},f_{\mfq})=(4,1)$
where $\mfq$ is the unique prime of $k$ above $q=2$,
and the prime divisor $13$ of $d(B)$ does not belong to
$\cP(\cD(2,4))\cup\{2\}$ (see Table \ref{tableABP}).
Since $3$ (\resp $13$) splits in $\Q(\sqrt{-2})$ (\resp $\Q(\sqrt{-1})$),
we have
$B\otimes_{\Q}\Q(\sqrt{-2})\not\cong\M_2(\Q(\sqrt{-2}))$
(\resp $B\otimes_{\Q}\Q(\sqrt{-1})\not\cong\M_2(\Q(\sqrt{-1}))$).

By \cite[p.94]{J}, we have
$M^B(\Q(\sqrt{-13})_w)\ne\emptyset$
for any place $w$ of $\Q(\sqrt{-13})$
(\cf\cite{JL}).
Therefore $M^B(k_v)\ne\emptyset$ for any place $v$ of $k$.

(2)
For a field $F$ of characteristic $\ne 2$ and two elements $a,b\in F^{\times}$,
let
$$\left(\frac{a,b}{F}\right)=F+Fe+Ff+Fef$$
be the quaternion algebra over $F$ defined by
$$e^2=a,\ f^2=b,\ ef=-fe.$$
For a prime number $p$, let $e_p,f_p,g_p$
be the ramification index of $p$ in $k/\Q$,
the degree of the residue field extension above $p$ in $k/\Q$,
and the number of primes of $k$ above $p$ respectively.

Let $(d(B),k)=(62,L(\sqrt{-39}))$ (\resp $(86,L(\sqrt{-15}))$).
First, we prove $B\otimes_{\Q}k\not\cong\M_2(k)$.
We see
$\displaystyle B\cong\left(\frac{62,13}{\Q}\right)$
(\resp $\displaystyle \left(\frac{86,5}{\Q}\right)$)
by \cite[\S 3.6 g)]{Shimizu}.
We have
$(e_2,f_2,g_2)=(1,3,2)$.
Let $v$ be place of $k$ above $2$.
By the same argument as in the proof of \cite[Proposition 8.1]{A3},
we have $B\otimes_{\Q}k_v\not\cong\M_2(k_v)$.
Therefore $B\otimes_{\Q}k\not\cong\M_2(k)$.

Applying Theorem \ref{mainthm} to $q=3$, we obtain
$M^B(k)=\emptyset$.
In fact, $(e_{\mfq},f_{\mfq})=(6,1)$
where $\mfq$ is the unique prime of $k$ above $q=3$,
and the prime divisor $31$ (\resp $43$) of $d(B)$ does not belong to
$\cP(\cD(3,12))\cup\{3\}$.
Since $31$ (\resp $43$) splits in $\Q(\sqrt{-3})$,
we have
$B\otimes_{\Q}\Q(\sqrt{-3})\not\cong\M_2(\Q(\sqrt{-3}))$.

By \cite[Table 1]{RdV}, we have
$M^B(\Q(\sqrt{-39})_w)\ne\emptyset$
(\resp $M^B(\Q(\sqrt{-15})_w)\ne\emptyset$)
for any place $w$ of $\Q(\sqrt{-39})$ (\resp $\Q(\sqrt{-15})$).
Therefore $M^B(k_v)\ne\emptyset$ for any place $v$ of $k$.

\end{proof}

\def\bibname{References}

(Keisuke Arai)
Department of Mathematics, School of Engineering,
Tokyo Denki University,
5 Senju Asahi-cho, Adachi-ku, Tokyo 120-8551, Japan

\textit{E-mail address}: \texttt{araik@mail.dendai.ac.jp}

\end{document}